\newcommand{\COLORON}{0}
\newcommand{\NOTESON}{1}
\newcommand{\Debug}{0}
\newcommand{\comment}[1]{}
\newcommand{\COMMENT}[1]{}
\definecolor{darkgray}{rgb}{0.3,0.3,0.3}
\newcommand{\defi}[1]{{\color{darkgray}\emph{#1}}}
\newtheorem{proposition}{Proposition}[section]
\newtheorem{theorem}[proposition]{Theorem}
\newtheorem{lemma}[proposition]{Lemma}
\newtheorem{conjecture}{{\color{red}Conjecture}}[section]
\newtheorem{problem}[conjecture]{{\color{red}Problem}}
\newtheorem{examp}[proposition]{Example}
\newcommand{\FIG}{0}
\newcommand{\note}[1]{ 

\hspace*{-30pt}
	{\color{blue}  NOTE: \color{Turquoise}{\small  \tt \begin{minipage}[c]{1.1\textwidth}  #1 \end{minipage} \ignorespacesafterend }} 
	
	}
\else \newcommand{\note}[1]{} \fi
\newcommand{\afsubm}[1]{ \ifnum \Debug = 1 {\mymargin{#1}}
\fi} 
\renewcommand{\color}[1]{}
\newcommand{\cc}{\ensuremath{\mathcal C}}
\newcommand{\sm}{\backslash}
\newcommand{\g}{\ensuremath{G\ }}
\newcommand{\Prb}[1]{Problem~\ref{#1}}
\newcommand{\Cnr}[1]{Con\-jecture~\ref{#1}}
\renewcommand{\iff}{if and only if}
\newcommand{\fe}{for every}
\newcommand{\Fe}{For every}
\newcommand{\st}{such that}
\newcommand{\ti}{there is}
\newcommand{\wrt}{with respect to}
\newcommand{\labtequ}[2]{
 \begin{equation} \label{#1} 	\begin{minipage}[c]{0.9\textwidth}  #2 \end{minipage} \ignorespacesafterend \end{equation} }
\newcommand{\mymargin}[1]{
  \marginpar{%
    \begin{minipage}{\marginparwidth}\small%
      \begin{flushleft}%
        {\color{blue}#1}%
      \end{flushleft}%
   \end{minipage}%
  }%
}%
\newcommand{\mySection}[2]{}
\title{Delay colourings of cubic graphs}
\author{Agelos Georgakopoulos
\medskip 
\\
  {Mathematics Institute}\\
 {University of Warwick}\\
  {CV4 7AL, UK}\\}
\date{}
\begin{document}
\maketitle

\begin{abstract}
In this note we prove the conjecture of  \cite{HaWiWi} that every bipartite multigraph with integer edge delays admits an edge colouring with $d+1$ colours in the special case where $d=3$. A connection to the Brualdi-Ryser-Stein conjecture is discussed.
\end{abstract}

\section{Introduction}

Motivated by scheduling issues in optical networks, Wilfong Haxell and Winkler \cite{HaWiWi} made the following elegant combinatorial conjecture:

\begin{conjecture} \label{conjdelay}
Let \g be a bipartite multigraph with partition classes $A,B$ and maximum degree $d$. Suppose that each edge $e$ is associated with an integer `delay' $r(e)$. Then $G$ admits an edge-colouring $f: E(G)\to \{0,\ldots,d\}$ \st\ $f$ is proper on $A$ and $f+r(mod\ d+1)$ is proper on $B$.
\end{conjecture}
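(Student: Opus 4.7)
The plan is to recast Conjecture~\ref{conjdelay} as an independent transversal problem and then exploit the tight structure of the resulting auxiliary graph. First I would reduce to the $d$-regular case: any bipartite multigraph of maximum degree $d$ embeds in a $d$-regular bipartite multigraph on the same bipartition (pad by adding a copy of itself and matching up leftover degree), and delays on the new edges may be chosen arbitrarily; a delay colouring of the extension restricts to one of the original. In the regular case, K\"onig's theorem gives a decomposition $E(G)=M_1\cup\cdots\cup M_d$ into perfect matchings, yielding a baseline proper colouring $g$ with $d$ colours and reserving $0$ as a ``spare'' colour available at every $A$-vertex.

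Next, I would set up an auxiliary graph $H$ with vertex set $\{(e,c) : e \in E(G),\ c\in\Z/(d{+}1)\}$, putting an edge between $(e,c)$ and $(e',c)$ whenever $e,e'$ share an $A$-vertex, and between $(e,c)$ and $(e',c')$ whenever $e,e'$ share a $B$-vertex with $c+r(e)\equiv c'+r(e')\pmod{d+1}$. A delay colouring is then precisely an independent transversal of the partition $V(H)=\bigsqcup_e \{(e,c):c\in\Z/(d+1)\}$ into classes of size $d+1$. The maximum degree of $H$ is at most $2(d-1)$, so Haxell's theorem, requiring $d+1\ge 4(d-1)$, applies only for $d\le 1$; however $H$ has very rigid structure (each class is independent, and the neighbourhood of $(e,c)$ is the disjoint union of two $(d{-}1)$-element transversals coming from the two endpoints of $e$), which should open the door to sharper results on independent transversals or rainbow matchings of Aharoni--Berger--Ziv or Szab\'o--Tardos type. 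In parallel I would pursue an augmenting-path approach on $g$, resolving $B$-side \emph{defects} one at a time by recolouring edges along alternating paths whose two colours are $0$ and some $i\in\{1,\ldots,d\}$; because $0$ is unused at every $A$-vertex such a swap is automatically free on the $A$-side, and the $B$-side constraint dictates, for each defective $b$ and each candidate $i$, the specific alternating path whose recolouring repairs $b$.

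The hard part will undoubtedly be showing that the augmenting procedure terminates, because through the action of the delays $r$ a single swap can create a new defect further along the alternating path. For $d=3$ there are few enough shifts that a case analysis remains tractable, but for general $d$ one needs a potential function on the set of defective $B$-vertices that strictly decreases, and identifying one is precisely the missing ingredient beyond the $d=3$ argument. The Brualdi--Ryser--Stein connection flagged in the abstract is suggestive here: on $K_{d,d}$ with a delay matrix coming from a Latin square, a delay colouring corresponds to a transversal-like object in an associated order-$(d{+}1)$ Latin structure, so Conjecture~\ref{conjdelay} in its sharpest form appears to encode BRS-type transversal problems, indicating that a complete proof for arbitrary $d$ may need to proceed in tandem with progress on BRS itself.
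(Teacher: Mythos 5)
There is a genuine gap: what you have written is a research plan, not a proof, and you say so yourself. The statement in question is the general conjecture for arbitrary $d$, which the paper does not prove either --- it only establishes the case $d=3$ (in fact in the stronger form of arbitrary distortions $r_e$). But your proposal does not even yield that special case. The independent-transversal reformulation is correct as far as it goes (the auxiliary graph indeed has maximum degree $2(d-1)$ and Haxell's bound fails for all $d\geq 2$), but you draw no conclusion from it beyond the hope that ``sharper results should open the door.'' The augmenting-path approach is likewise left unfinished: you explicitly concede that the termination argument --- a potential function showing the set of $B$-side defects shrinks --- is ``precisely the missing ingredient,'' and the claim that ``for $d=3$ there are few enough shifts that a case analysis remains tractable'' is never backed by an actual case analysis. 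A swap along a $(0,i)$-alternating path can indeed create new defects, and nothing in the proposal controls this; without that, no case of the conjecture is proved.

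For comparison, the paper's proof of the $d=3$ case avoids these difficulties entirely. After reducing to the cubic case and taking a decomposition into perfect matchings $M,M',M''$, it treats $M'\cup M''$ as a 2-factor decomposed into cycles, and for each cycle $C$ it precolours the $M$-edges at two consecutive $A$-vertices $u,y$ of $C$ (meeting at a $B$-vertex $v$) so that the lists of available colours $L_{vu},L_{vy}$ are forced to be distinct yet intersecting, each of size at least $2$; the colouring is then completed greedily around $C$, with the reserved common colour guaranteeing the last edge can always be coloured. This is a direct, constructive argument with no augmentation and no appeal to transversal theorems, and it even shows all but one edge of $M_{C\cap A}$ per cycle can be precoloured arbitrarily. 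Your closing observation that the general-distortion problem encodes Brualdi--Ryser--Stein-type questions is consistent with the paper's discussion, but it is evidence of difficulty, not a proof step.
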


See also \cite{alon_edge_2007}.
When the graph consists of just two vertices joined by $d$ parallel edges, this is implied by a theorem of Hall \cite{hallAbelian} as noted in \cite{HaWiWi}. This fact has been dubbed `the fundamental theorem of juggling' (think of the two vertices as the two hands of a juggler juggling $d$ balls).

More generally, one can consider the case where the edges impose an arbitrary distortion of the colours, given by a permutation $r$ of the set of colours, rather than `delaying' the colour by a constant:

\begin{problem} \label{p}
Let \g be a bipartite multigraph with partition classes $A,B$ and maximum degree $d$. Suppose that each edge $e$ is associated with a permutation $r_e$ of the set $\{0,1,\ldots, d\}$. Then $G$ admits a proper distortion-colouring with colours $\{0,1,\ldots, d\}$ (definitions in the next section). 
\end{problem}

It turns out that in this generality the problem becomes much harder than its special case in \Cnr{conjdelay}: as noticed by N.~Alon (private communication), the special case of \Prb{p} where each class $A,B$ cosists of just one vertex is equivalent to the following conjecture of \cite{AhBeRai}, which is a strengthening of a well-known conjecture of Brualdi and Stein on transversals in Latin squares.

\begin{conjecture}[{\cite[Conjecture 2.4\footnote{The conjecture of \cite{AhBeRai} is more general than \Cnr{a}: it allows for larger sets $V_2,V_3$ with an appropriate modification of the perfect matching condition.}]{AhBeRai}}] \label{a}
Let $H$ be a 3-partite 3-uniform hypergraph, with partition classes $V_1,V_2,V_3$ \st\ $|V_2| =|V_3|=|V_1| + 1$. Suppose that for every $x \in V_1$, the set of hyperedges containing $x$ induces a perfect matching of $V_2 \cup V_3$. Then $V_1$ is matchable.
\end{conjecture}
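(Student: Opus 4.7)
The statement is a well-known open conjecture strengthening Brualdi--Ryser--Stein, so any complete proof would be a major breakthrough; what I can offer is the most plausible line of attack and an honest indication of where it stalls.

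Setting $n := |V_1|$, each $x \in V_1$ determines a bijection $\pi_x : V_2 \to V_3$ (both sides have $n+1$ elements), and the task is to choose injectively $y_x \in V_2$ such that $x \mapsto \pi_x(y_x)$ is also injective. Viewing each such matching as a colour class of $n+1$ edges in $K_{V_2,V_3}$, the problem becomes: given $n$ perfect matchings on $(n+1)+(n+1)$ vertices, find a rainbow partial matching of size $n$ using one edge from each. The deficiency $|V_2|-|V_1|=1$ provides exactly the slack that distinguishes the assertion from Brualdi--Stein, where the matchings would be on $n+n$ vertices with no slack.

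My plan is to follow the Aharoni--Berger topological-Hall strategy. Form the simplicial complex $\mathcal I$ whose faces are the sets of pairwise vertex-disjoint hyperedges of $H$, and try to show that $\mathcal I$ is $(|V_1|-2)$-connected; the topological Hall theorem of Aharoni--Berger then delivers a matching of $V_1$. Unwinding the nerve-type arguments of Aharoni--Berger--Meshulam, the required connectivity reduces inductively to proving that for every $k \le n$ and every $k$-subset $S \subseteq V_1$ the restriction $H[S]$ admits a rainbow matching of size $k$. For $k \le 2n/3$ this follows from Haxell's theorem on independent transversals, and for $k$ small compared to $n$ from a straightforward greedy construction, so the induction proceeds cleanly in a wide initial range.

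The main obstacle is the remaining regime $2n/3 < k \le n$. Here the state of the art --- iterative absorption in the style of Keevash--Pokrovskiy--Sudakov, and the random rainbow-matching machinery of Pokrovskiy and of Montgomery--Pokrovskiy--Sudakov --- reaches rainbow matchings of size $n - o(n)$, but closing the last $o(n)$ gap is precisely the content of Brualdi--Ryser--Stein and is therefore at least as hard as the statement we are trying to prove. A realistic fallback is to verify the conjecture for small $n$ by computer search, aggressively exploiting the slack $|V_2|-|V_1|=1$ to prune the search tree, or to prove it under structural hypotheses on the bijections $\pi_x$ --- for instance when the family $\{\pi_x \pi_{x'}^{-1}\}$ lies in a fixed abelian group, where the juggling theorem of Hall cited in the introduction can be invoked directly.
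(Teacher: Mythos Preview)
The paper does not prove this statement: it is quoted verbatim as an open conjecture from \cite{AhBeRai}, introduced only to illustrate that the general distortion-colouring problem (Problem~\ref{p}) already contains a known hard conjecture as a special case. There is therefore no proof in the paper to compare your attempt against.

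Your assessment is accurate. You correctly reformulate the problem as finding a full rainbow matching from $n$ perfect matchings of $K_{n+1,n+1}$, you correctly identify that this is at least as hard as the Brualdi--Stein conjecture, and you are honest that the topological/absorption machinery you sketch stalls at the same $n-o(n)$ barrier that is the current frontier. So there is no error in your write-up, but also no proof --- which is exactly the situation the paper itself is in with respect to this statement. The paper's actual contribution is the unrelated Theorem in Section~3 (the $d=3$ case of Conjecture~\ref{conjdelay}), not any progress on Conjecture~\ref{a}.
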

Here, $V_1$ being matchable means that there is a matching in $H$ containing all vertices in $V_1$.

Indeed, to see the equivalence, represent each edge in \Prb{p} by a vertex in $V_1$, and let $V_2, V_3$ be sets of size $d+1=|V_1+1|$, to be thought of as the colour on the left endvertex and the colour on the right endvertex respectively.

\Cnr{a} strengthens the following well-known conjecture, made independently by  Brualdi \cite{BrualdiRyser} and Stein \cite{SteTra}


\begin{conjecture}[Brualdi-Stein] \label{latin}
In every $n \times n$ Latin square there exists a transversal of size $n-1$.
\end{conjecture}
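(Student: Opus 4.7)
The final statement is \Cnr{latin}, the classical Brualdi--Stein conjecture, a long-standing open problem; strong partial results for sufficiently large $n$ have been obtained via probabilistic and absorbing methods, but the general case remains out of reach. Strictly speaking, no proof is offered by the present paper: the conjecture is invoked purely as motivation for the connection between \Prb{p} and \Cnr{a}. With that caveat, the plan is to locate a transversal of size $n-1$ in an arbitrary $n \times n$ Latin square $L$ by combining a random near-transversal with an absorber.

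The first move is to rephrase the square as a $3$-partite $3$-uniform hypergraph $H$ on vertex classes $R$, $C$, $S$ (rows, columns, symbols), with one hyperedge $\{r,c,s\}$ per cell; transversals of size $k$ correspond to matchings of size $k$ in $H$, putting us in the setting of \Cnr{a}. I would then use a semi-random (nibble) process to build a partial transversal $T_0$ covering all but $o(n)$ rows, columns and symbols, while maintaining a pseudorandomness condition on the remaining cells. In parallel, and before running the nibble, one would construct an absorbing set $A\subseteq E(H)$ whose internal flexibility allows any sufficiently small deficit on $R$, $C$, $S$ to be swapped in via local switches. Finally, the deficit left by $T_0$ is absorbed into $A$, leaving exactly one uncovered row and yielding the required transversal.

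The main obstacle, and the reason the conjecture remains open in full generality, is the rigidity of the three-fold row--column--symbol constraint: standard absorbing lemmas are tailored to hypergraphs with more freedom, and engineering an absorber that simultaneously respects all three constraints requires intricate counting of Latin rectangles together with delicate probabilistic control. A secondary obstacle is that this probabilistic framework gives nothing for small $n$, where the Cayley-table obstruction for $\mathbb{Z}_{2k}$ already shows that a full transversal of size $n$ can fail; ruling out the analogous structural obstruction at size $n-1$ is combinatorially very subtle, which is presumably why a short, self-contained argument of the kind one might hope to sketch here has never been found.
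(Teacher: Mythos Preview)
You correctly observe that the paper offers no proof of \Cnr{latin}: it is stated as an open conjecture, and the only argument the paper supplies in its vicinity is the short reduction showing that \Cnr{a} implies \Cnr{latin}. There is therefore nothing in the paper to compare your proposal against.

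That said, what you go on to write is not a proof either, and you should not present it as one. The nibble-plus-absorber outline you sketch is a reasonable summary of the techniques behind known asymptotic partial results (transversals of size $n - o(n)$, and more recently $n - O(\log n / \log\log n)$), but it does not close the gap to $n-1$ for all $n$; you yourself point out both obstacles. If the intent was simply to explain why the conjecture is hard, that is fine as commentary, but framing it under the heading ``proof proposal'' is misleading. The honest answer here is that the statement is a conjecture, the paper does not prove it, and neither do you.
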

An $n \times n$ matrix with entries in $\{1,\ldots, n\}$ is a \defi{Latin square}, if no two entries in the same row or in the same column are equal. A \defi{transversal} is a set of entries, each in a different row and in a different column, and each containing a different symbol. (For odd $n$, Ryser \cite{ryser} conjectured that there is even a transversal of size $n$.)

To see that \Cnr{a} implies \Cnr{latin}, construct a 3-partite hypergraph $H$ with $V_1$ being the set of rows of that Latin square $L$, $V_2$ the set of columns, and $V_3= \{1,\ldots, n\}$, and for each entry of $L$ introduce an edge containing the three corresponding vertices of $H$. Then delete an arbitrary vertex in $V_1$ with all edges containing it to obtain the setup of \Cnr{a}.

\medskip

The aim of this note is to prove that \Prb{p} has a positive answer for $d=3$.


\section{Definitions}

Let $G=(V,E)$ be a bipartite multigraph of maximum degree $d$ with bipartition $\{A,B\}$. Let  $Col:= \{0,1,\ldots, d\}$ be the set of \defi{colours}, and suppose that every edge $e\in E$ is associated with  a bijection $r_e$ on $Col$, called the \defi{distortion} of $e$; intuitively, we are going to colour $e$ at its $A$ end and the colour will be distorted by $r_e$ when seen from $B$ (in \cite{HaWiWi} $r_e$ was addition,  mod $d+1$,  with the `delay' of $e$). If $a,b$ is the endvertex of $e$ in $A,B$ respectively, then we use the notation $ba(\cdot)$ to denote $r_e(\cdot)$ and $ab(\cdot)$ to denote $r^{-1}_e(\cdot)$.

A $k$-colouring of a set $E$ (of edges) is a function $f : E \to \{0,1,\ldots, k-1\}$. 
A $k$-colouring $f$ of the edges of a multigraph $G$ as above is called a \defi{proper distortion-colouring} (\wrt\ the permutations $r_e$), if \fe\ $a\in A$ we have $f(e)\neq f(g)$ \fe\ two edges $e,g$ incident with $a$ and \fe\ $b\in B$ we have $r_e(f(e))\neq r_g(f(g))$ \fe\ two edges $e,g$ incident with $b$.

\section{Main}

\begin{theorem}
\Fe\ bipartite multigraph \g of maximum degree 3, and any edge distortions, 
\ti\ a proper distortion-colouring of $E(G)$ with 4 colours 0,1,2,3.
\end{theorem}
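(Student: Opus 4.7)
I would proceed by induction on $|E(G)|$, with the trivial base case $|E(G)| = 0$. For the inductive step, pick any edge $e = ab$ of $G$ and apply the induction hypothesis to $G - e$ to obtain a proper distortion-colouring $f \colon E(G)\setminus\{e\} \to \{0,1,2,3\}$. We then try to extend $f$ to $e$. Set
$$C_a := \{f(e') : e' \ni a,\ e' \neq e\} \qquad\text{and}\qquad \widetilde{C}_b := \{r_e^{-1}(r_{e'}(f(e'))) : e' \ni b,\ e' \neq e\},$$
so that $C_a$ is the set of $A$-colours forbidden at $a$, and $\widetilde{C}_b$ is the set of $A$-colours for $e$ that would cause a conflict at $b$. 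A direct extension succeeds whenever $C_a \cup \widetilde{C}_b \neq \{0,1,2,3\}$. Since $|C_a|, |\widetilde{C}_b| \le 2$, extension can fail only when $\deg(a) = \deg(b) = 3$ and $C_a$, $\widetilde{C}_b$ are disjoint pairs partitioning $\{0,1,2,3\}$.

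When this obstruction occurs, I would resolve it by an augmenting-walk argument, in the spirit of the Kempe-chain technique used in proofs of Vizing's theorem. Pick one of the two edges $e_1$ at $b$ whose distorted colour forbids some value in $\widetilde{C}_b$, and attempt to change $f(e_1)$ to a value that is both admissible at its $A$-endpoint $a_1$ and liberates a colour for $e$ at $b$. If no such value exists, the desired change collides with a unique edge at $a_1$, and we propagate the recolouring along a walk $e_1, e_2, \ldots$ alternating between $B$- and $A$-vertices. The ``colour to swap'' evolves at each crossing by the appropriate composition $r^{-1}_\cdot \circ r_\cdot$, so unlike in the classical Kempe swap we are not tracking a fixed two-colour subgraph but a walk carrying an evolving pair of colours.

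The main task is to show that such a walk can always be arranged to terminate with a successful modification of $f$, either by closing into a short cycle on which a coherent swap is possible, or by reaching a vertex of degree at most $2$ where there is enough slack to absorb the final recolouring. Since there are $4$ colours against $\Delta(G) = 3$, every vertex along the walk has two free $A$-colours and two free $B$-colours, providing the slack needed at each local step. The main obstacle will be that arbitrary distortions prevent the two ``swap colours'' from remaining fixed along the walk; at each $B$-vertex the relevant pair depends on the permutations $r_e$ involved, so a detailed case analysis organised around the three partitions of $\{0,1,2,3\}$ into pairs, and around the possible distortion patterns encountered, looks unavoidable. Managing this analysis, and in particular ruling out pathological closed walks that admit no consistent recolouring, will be the heart of the argument.
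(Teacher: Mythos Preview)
Your proposal is not a proof but a plan, and the plan leaves the central difficulty open. You reduce to the case where extending $f$ to $e$ is blocked and then propose an augmenting-walk recolouring ``in the spirit of'' Vizing's Kempe chains, but you yourself identify the crux---``ruling out pathological closed walks that admit no consistent recolouring will be the heart of the argument''---and then stop. That is precisely the step that needs to be done. With arbitrary distortions the pair of colours being swapped is transformed at every edge by a composition $r^{-1}_\cdot\circ r_\cdot$, so when the walk closes up the accumulated permutation around the cycle can be anything; there is no parity invariant or bipartite-path structure forcing a coherent swap, and your hope of reaching a low-degree vertex is not available once one passes (as one may) to the $3$-regular case. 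Until you actually carry out the termination analysis, this is a sketch rather than a proof, and it is not clear that the sketch can be completed.

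The paper avoids Kempe chains entirely. It makes $G$ cubic, splits $E(G)$ into a perfect matching $M$ and a $2$-factor $M'\cup M''$, and decomposes the latter into cycles. For each cycle $C$ it shows that one can precolour the $M$-edges at the $A$-side of $C$ so that, whatever colours the $M$-edges at the $B$-side later receive, $C$ can be properly completed from the resulting lists of size $\geq 2$. The only non-greedy step is a local trick: at one $B$-vertex $v$ on $C$ with neighbours $u,y\in A$, the colours of the $M$-edges at $u$ and $y$ are chosen so that the two lists at $v$ are neither equal nor disjoint, which is exactly what is needed for the greedy walk around $C$ to close. This exploits the $d=3$ structure (the complement of a matching is a union of cycles) and replaces your open-ended augmenting-walk analysis with a two-line list-colouring observation.
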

\begin{proof}
We may assume without loss of generality that every vertex of \g has degree precisely 3, for otherwise we can add some dummy edges to make \g cubic. It is well known that the edges of a regular bipartite multigraph can be decomposed into disjoint perfect matchings \cite[Corollary 2.1.3]{DiestelBook05}. So let $M,M',M''$ be perfect matchings of \g with $M \cup M' \cup M''=E(G)$. Then $M' \cup M''$ is a 2-factor, and it can be decomposed into a collection \cc\ of edge disjoint cycles.

Let $\{A,B\}$ be the bipartition of $V(G)$. We are going to let each element of \cc\ choose the colours of the edges of $M$ incident with its $A$ side. More precisely, given a $C \in \cc$, let $M_{C\cap A}$ denote the set of edges in $M$ incident with $C\cap A$, and let $M_{C\cap B}$  denote the set of edges in $M \sm M_{C\cap A}$ incident with $B$. We are going to prove that 
\labtequ{MA}{\Fe\ $C \in \cc$, \ti\ a 4-colouring $f_A$ of $M_{C\cap A}$ \st\ \fe\ 4-colouring $f_B$ of $M_{C\cap B}$, there is a 4-colouring $f_C$ of $E(C)$ \st\ $f_A \cup f_B \cup f_C$  is a proper distortion-4-colouring.}
Note that \eqref{MA} easily implies a proper distortion-colouring of $E(G)$ with 4 colours: the sets $M_{C\cap A}\mid C\in\cc$ are pairwise edge disjoint, and their union is $M$. Thus, we can begin by colouring each of them by a colouring $f_A$ as in \eqref{MA}, and then we can extend the colouring to each $C\in\cc$ keeping it proper.

So let us prove \eqref{MA}. Given a $C \in \cc$, pick a 2-edge subarc $uvy$ of $C$ with $u,y\in A$. Distinguish two cases:

If the distortions of the edges $uv,vy$ are identical, then give the edges $m_u,m_y$ of $M$ incident with $u,y$ colours that are different (when seen from $A$). If $C$ happens to be a 2-cycle, in which case $u=y$, give $m_u=m_y$ any colour.

If those distortions are not identical, then colour (the $A$ side of) both $m_u,m_y$  with a colour $\alpha$  \st\ $vu(\alpha)\neq vy(\alpha)$. 

In both cases, colour the rest of $M_{C\cap A}$ arbitrarily; those colours will not matter.

We claim that this colouring $f_A$ has the desired property. To prove this, let $f_B$ be any colouring of $M_{C\cap B}$, and note that \fe\ edge $e\in E(C)$ the set $L_e$ of still available colours for $e$, that is, the colours that would not conflict with $f_A \cup f_B$ if given to $e$ on its $B$ side, say, has at least 2 elements; indeed, only 2 edges adjacent with $e$ have been coloured so far and we had 4 colours to begin with. 

Let us first deal with the case where $C$ is not a 2-cycle, and  consider again the two edges $vu,vy$ as above.
We claim that $L_{vu}\neq L_{vy}$; indeed, the colours we gave to $m_u,m_y$ were chosen is such a way so as to forbid a different candidate colour at the $v$ side of each of $vu,vy$, and so $L_{vu}\neq L_{vy}$ holds. 
On the other hand, the colour of the  edge in $M$ incident with $v$ forbids the same colour for each of $vu,vy$, which implies that $L_{vu}\cap L_{vy}\neq \emptyset$. 

Thus, since $L_{vu}, L_{vy}$ are neither equal nor disjoint, and each contains at least two colours, we can find a common colour $\beta \in L_{vu}\cap L_{vy}$ and another 2 colours $\gamma\in L_{vu}, \delta\in L_{vy}$ so that $\beta, \gamma,\delta$ are all distinct. Now colour $vu$ with $uv(\gamma)$ (so that its colour seems to be $\gamma$ on its $B$ side),
and note that our colouring is still proper, since this colour came from the allowed list. Consider the next edge $ux$ of $C$ incident with $u$. This edge still has at least 1 available colour after we coloured $uv$ (recall that $|L_e|\geq 2)$, so give it that colour. Continue like this along $C$, to properly colour all its edges except  the last edge $vy$. Now note that when we coloured $vu$ we still left 2 colours available for the $B$ side of $vy$, namely $\beta,\delta\neq \gamma$. At least one of them is still available now, and we assign it to the $B$ side of $vy$ completing the proper distortion-colouring of $C$.

If $C$ is a 2-cycle then the situation is much simpler, and it is straightforward to check that \eqref{MA} holds by distinguishing two cases according to whether its 2 edges bear the same distortions.

This completes the proof. Note that we proved something stronger than \eqref{MA}: for each $C \in \cc$, all but one of the edges in $M_{C\cap A}$ can be precoloured arbitrarily. 
\end{proof}

\comment{
\begin{problem}
Prove that \cite[Conjecture 1.1]{HaWiWi} is true \iff\ it is true for arbitrary distortions instead of delays.
\end{problem}
}

\bibliographystyle{plain}
\bibliography{delay}
\end{document}